\newtheorem{theorem}{Theorem}[section]
\newtheorem{corollary}[theorem]{Corollary}
\newtheorem{proposition}[theorem]{Proposition}
\newtheorem{example}[theorem]{Example}
\numberwithin{equation}{section}
\newcommand{\N}{\mathbb{N}}
\newcommand{\Z}{\mathbb{Z}}
\newcommand{\C}{\mathbb{C}}
\newcommand{\T}{\mathbb{T}}
\newcommand{\D}{\mathbb{D}}
\newcommand{\supp}{\textup{supp}}
\begin{document}

\title[An areal analog of Mahler's measure]
{An areal analog of Mahler's measure}%
\author{Igor E. Pritsker}%

\thanks{Research was partially supported by the National Security
Agency under grant H98230-06-1-0055, and by the Alexander von
Humboldt Foundation.}

\address{Department of Mathematics, Oklahoma State University,
Stillwater, OK 74078, U.S.A.}%
\email{igor@math.okstate.edu}

\subjclass[2000]{Primary 11C08; Secondary 11G50, 30C10}%
\keywords{Polynomials, Mahler's measure, heights, zero distribution,
Bergman spaces, inequalities, Szeg\H{o} composition, approximation
by polynomials with integer coefficients.}%



\begin{abstract}
We consider a version of height on polynomial spaces defined by the
integral over the normalized area measure on the unit disk. This
natural analog of Mahler's measure arises in connection with
extremal problems for Bergman spaces. It inherits many nice
properties such as the multiplicative one. However, this height is a
lower bound for Mahler's measure, and it can be substantially lower.
We discuss some similarities and differences between the two.
\end{abstract}

\maketitle


\section{Definition and main properties}

Let $\C_n[z]$ and $\Z_n[z]$ be the sets of all polynomials of degree
at most $n$ with complex and integer coefficients respectively.
Mahler's measure of a polynomial $P_n\in\C_n[z]$ is defined by
\[
M(P_n) := \|P_n\|_{H^0} = \exp\left(\frac{1}{2\pi}\int_0^{2\pi}
\log|P_n(e^{i\theta})|\,d\theta\right).
\]
It is also known as the $H^0$ Hardy space norm or the contour
geometric mean. An application of Jensen's inequality immediately
gives that
\[
M(P_n) = |a_n| \prod_{|z_j|>1} |z_j|
\]
for $P_n(z)=a_n \prod_{j=1}^n (z-z_j)\in\C_n[z].$ This height on the
space of polynomials is extensively used in number theory. Recall
that a cyclotomic (circle dividing) polynomial is defined as an
irreducible factor of $z^n-1,\ n\in\N.$ Clearly, if $Q_n$ is
cyclotomic, then $M(Q_n)=1.$ A well known and difficult open problem
related to Mahler's measure is the Lehmer conjecture on the lower
bound for the measure of irreducible non-cyclotomic polynomials from
$\Z_n[z].$ Lehmer \cite{Leh} carried out extensive computations of
the values of $M(P_n)$, $P_n\in\Z_n[z]$, but found no non-cyclotomic
polynomial with Mahler's measure smaller than that of the polynomial
$L(z):=z^{10}+z^9-z^7-z^6-z^5-z^4-z^3+z+1,$ which he conjectured to
be always true. Eight zeros of $L$ lie on the unit circle, one
inside and one outside. The latter zero $\zeta$ is believed to be
the smallest Salem number, with the value
$M(L)=\zeta=1.1762808\ldots$, which is the smallest Mahler's measure
according to the Lehmer conjecture. More history of this conjecture
may be found in \cite{EW}, \cite{Boy}, \cite{GH} and \cite{Bor}.

A natural counterpart of Mahler's measure is obtained by replacing
the normalized arclength measure on the unit circumference $\T$ by
the normalized area measure on the open unit disk $\D.$ Namely, we
define the $A^0$ Bergman space norm by
\[
\|P_n\|_0 := \exp\left(\frac{1}{\pi} \iint_{\D} \log|P_n(z)|
\,dA\right).
\]
This also gives a multiplicative height of the polynomial $P_n.$
Furthermore, it has the same relation to Bergman spaces as Mahler's
measure to Hardy spaces:
\[
\|P_n\|_0 = \lim_{p\to0+} \|P_n\|_p,
\]
see \cite{HLP}, where
\[
\|P_n\|_p := \left(\frac{1}{\pi} \iint_{\D} |P_n(z)|^p
\,dA\right)^{1/p}, \quad 0<p<\infty.
\]
In addition, it arises in the following version of the extremal
problem considered by Szeg\H{o} \cite{Sz20} for the Hardy space
$H^2$:
\[
\inf_{Q(0)=0} \frac{1}{\pi} \iint_{\D} |1-Q(z)|^p\, |P_n(z)|\, dA(z)
= \|P_n\|_0,\quad 0<p<\infty,
\]
where $Q$ is any polynomial vanishing at $0$, see \cite[p. 136]{Ga}.

Using the fact that the integral means of $\log|P_n(re^{it})|$ over
$|z|=r$ are increasing with $r$ \cite{DS}, we immediately obtain
that
\begin{align} \label{1.1}
\|P_n\|_0 \le M(P_n).
\end{align}
Also, if $P_n(z)=\sum_{k=0}^n a_k z^k$ then
\begin{align} \label{1.2}
\|P_n\|_0 \ge |a_0|,
\end{align}
which follows from the area mean value inequality for the
subharmonic function $\log|P_n|$ (cf. \cite{DS}). Hence
\begin{align} \label{1.3}
\|P_n\|_0 \ge 1 \quad\mbox{for all } P_n\in\Z_n[z],\ P_n(0)\neq 0.
\end{align}
In fact, there is a direct relation between Mahler's measure and its
areal analog, given below.

\begin{theorem} \label{thm1.1}
Let $P_n(z) = a_n \prod_{j=1}^n (z-z_j) = \sum_{k=0}^n a_k z^k
\in\C_n[z].$ If $P_n$ has no roots in $\D,$ then $\|P_n\|_0 = M(P_n)
=|a_0|.$ Otherwise,
\begin{align} \label{1.4}
\|P_n\|_0 = M(P_n)\, \exp\left(\frac{1}{2} \sum_{|z_j| < 1}
(|z_j|^2-1) \right).
\end{align}
\end{theorem}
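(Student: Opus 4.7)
\medskip

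\noindent\textbf{Proof proposal.} The plan is to reduce the statement to a direct evaluation of the area integrals $\frac{1}{\pi}\iint_{\D} \log|z - z_j|\, dA$ for each root $z_j$, and then to sum these contributions. Writing $P_n(z) = a_n \prod_{j=1}^n (z - z_j)$ and taking logarithms,
\[
\log \|P_n\|_0 \;=\; \log|a_n| \;+\; \sum_{j=1}^n \frac{1}{\pi} \iint_{\D} \log|z - z_j| \, dA,
\]
so everything hinges on a single-root computation.

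For a root with $|z_j| \ge 1$, the function $z \mapsto \log|z - z_j|$ is harmonic in $\D$ (and subharmonic with an integrable singularity on the boundary in the borderline case $|z_j|=1$, which is handled by a standard limiting argument). The area mean-value property for harmonic functions then yields
\[
\frac{1}{\pi} \iint_{\D} \log|z - z_j|\, dA \;=\; \log|z_j|.
\]
For a root with $|z_j| < 1$, I would pass to polar coordinates and apply Jensen's formula on circles of radius $r$ to evaluate the inner circular mean as $\max(\log r, \log|z_j|)$. A short radial integration, splitting at $r = |z_j|$, then gives
\[
\frac{1}{\pi} \iint_{\D} \log|z - z_j|\, dA \;=\; \tfrac{1}{2}(|z_j|^2 - 1).
\]
This two-case evaluation is the only real calculation and is where the asymmetry between interior and exterior zeros enters.

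Combining the two cases yields
\[
\log \|P_n\|_0 \;=\; \log|a_n| \;+\; \sum_{|z_j|\ge 1} \log|z_j| \;+\; \tfrac{1}{2}\sum_{|z_j|<1} (|z_j|^2 - 1),
\]
while the product formula for Mahler's measure recalled in the introduction reads $\log M(P_n) = \log|a_n| + \sum_{|z_j|>1}\log|z_j|$; since zeros on $\T$ contribute $0$ to both expressions, subtracting gives (\ref{1.4}). For the first assertion, if $P_n$ has no roots in $\D$ the correction sum is empty, so $\|P_n\|_0 = M(P_n)$; moreover every root satisfies $|z_j|\ge 1$, and the identity $|a_0| = |a_n|\prod_j |z_j| = |a_n|\prod_{|z_j|>1}|z_j| = M(P_n)$ completes the equality. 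The main obstacle is the computation of the area integral for an interior point, where one must be careful to justify Fubini and handle the logarithmic singularity; once Jensen's formula is invoked on each circle, the remaining integration is elementary.
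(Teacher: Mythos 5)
Your proposal is correct and follows essentially the same route as the paper: both evaluate $\log\|P_n\|_0$ in polar coordinates, use Jensen's formula on each circle $|z|=r$ to get the circular mean, and integrate radially, splitting at $r=|z_j|$ for interior zeros. The only cosmetic difference is that you factor the polynomial and treat one root at a time, whereas the paper applies Jensen's formula to the whole polynomial at each radius before integrating; the computation is identical.
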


This shows that the value of $\|P_n\|_0$ is influenced by the zeros
inside the unit disk more than that of $M(P_n).$ We immediately
obtain the following comparison result from Theorem \ref{thm1.1}.

\begin{corollary} \label{cor1.2}
For any $P_n\in\C_n[z],$ we have
\begin{align} \label{1.5}
e^{-n/2} M(P_n) \le \|P_n\|_0 \le M(P_n).
\end{align}
Equality holds in the lower estimate if and only if $P_n(z)= a_n
z^n.$ The upper estimate turns into equality for any polynomial
without zeros in the unit disk.
\end{corollary}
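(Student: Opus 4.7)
The plan is to derive the corollary directly from Theorem~\ref{thm1.1}, treating the two cases of that theorem separately and then observing that each term in the exponential factor in \eqref{1.4} is bounded below by $-1$.

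For the upper estimate, I would simply invoke the inequality \eqref{1.1}, which was already established from the monotonicity of the integral means of $\log|P_n(re^{it})|$. The equality statement in the upper estimate is the content of the first sentence of Theorem~\ref{thm1.1}: if $P_n$ has no zeros in $\D$ then $\|P_n\|_0=M(P_n)=|a_0|$, so the upper bound is attained.

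For the lower estimate, I would apply formula \eqref{1.4}. If $P_n$ has zeros in $\D$, say $z_{j_1},\dots,z_{j_m}$ with $m\le n$ counting multiplicity, then
\[
\sum_{|z_j|<1}(|z_j|^2-1) \ge \sum_{|z_j|<1}(-1) = -m \ge -n,
\]
since $|z_j|^2-1\in[-1,0)$ for each such $z_j$. Substituting into \eqref{1.4} gives $\|P_n\|_0\ge e^{-n/2}M(P_n)$. If $P_n$ has no zeros in $\D$, the lower bound is trivial because it is weaker than the upper bound already proved.

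The equality analysis in the lower bound is the only mildly delicate point, but it is immediate from the chain above: equality in $\sum(|z_j|^2-1)\ge -n$ forces both $m=n$ (all zeros of $P_n$ lie in $\D$, and $P_n$ has full degree $n$) and $|z_j|=0$ for each $j$, which means $P_n(z)=a_n z^n$. Conversely, a direct computation (or a second application of \eqref{1.4}) shows that $\|a_n z^n\|_0=|a_n|e^{-n/2}=e^{-n/2}M(a_n z^n)$, confirming that this is exactly the extremal case. I do not anticipate a serious obstacle here; the whole argument is a one-line estimate applied to the formula already provided by Theorem~\ref{thm1.1}.
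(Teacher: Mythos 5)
Your proposal is correct and follows essentially the same route as the paper: both deduce \eqref{1.5} from the formula \eqref{1.4} of Theorem~\ref{thm1.1} by observing that the exponential factor lies in $[e^{-n/2},1]$, attaining its minimum exactly when all $n$ zeros are at the origin and its maximum when no zeros lie in $\D$. Your write-up merely spells out the termwise bound $|z_j|^2-1\ge -1$ and the equality analysis in more detail than the paper does.
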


A well known theorem of Kronecker \cite{Kr} states that any monic
irreducible polynomial $P_n\in\Z_n[z],\ P_n(0)\neq 0,$ with all
zeros in the closed unit disk, must be cyclotomic. One can write
that statement in the form: $M(P_n)=1$ for such $P_n$ if and only if
$P_n$ is cyclotomic. A direct analog of this result exists for
$\|P_n\|_0.$

\begin{theorem} \label{thm1.3}
Suppose that $P_n\in\Z_n[z],\ P_n(0)\neq 0,$ is an irreducible
polynomial with all zeros in the closed unit disk. It is cyclotomic
if and only if $\|P_n\|_0=1.$
\end{theorem}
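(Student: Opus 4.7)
The plan is to reduce Theorem \ref{thm1.3} to Kronecker's theorem via the explicit formula in Theorem \ref{thm1.1}. For the easy direction, assume $P_n$ is cyclotomic: then $P_n$ is monic with all zeros on $\T$, so the sum in (\ref{1.4}) is empty, $M(P_n)=|a_n|=1$, and hence $\|P_n\|_0=1$.

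For the converse, assume $\|P_n\|_0=1$. Since every zero lies in the closed unit disk, $M(P_n)=|a_n|$ and (\ref{1.4}) becomes
$$|a_n|=\exp\left(\frac{1}{2}\sum_{|z_j|<1}(1-|z_j|^2)\right).$$
The key analytic input is the elementary inequality $1-x^2\le -2\log x$ on $(0,1]$, which is strict for $x<1$. Summing over the zeros inside $\D$ and using $\prod_{j=1}^n|z_j|=|a_0|/|a_n|$ (together with $|z_j|=1$ for zeros on the circle), I obtain
$$\frac{1}{2}\sum_{|z_j|<1}(1-|z_j|^2)\le -\sum_{|z_j|<1}\log|z_j|=\log\frac{|a_n|}{|a_0|},$$
with strict inequality whenever $P_n$ has any zero strictly inside $\D$.

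Feeding this bound back into the identity yields $|a_n|\le|a_n|/|a_0|$, i.e., $|a_0|\le 1$, and strictly less if any zero lies in $\D$. Since $P_n\in\Z_n[z]$ and $P_n(0)\neq 0$ force $|a_0|\ge 1$, no zero can be strictly inside $\D$; returning to the identity then gives $|a_n|=1$. After possibly replacing $P_n$ by $-P_n$ we thus have a monic irreducible integer polynomial with all zeros on $\T$, and Kronecker's theorem identifies $P_n$ with a cyclotomic polynomial. The principal obstacle is securing the \emph{strict} form of the scalar inequality $1-x^2<-2\log x$ for $x\in(0,1)$: without strictness the integrality of $a_0$ would yield only $|a_0|\le 1$, which would not rule out interior zeros of $P_n$, and the reduction to Kronecker's theorem would break down.
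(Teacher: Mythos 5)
Your proposal is correct and takes essentially the same route as the paper: your strict scalar inequality $1-x^2<-2\log x$ on $(0,1)$ is precisely the paper's observation that $g(x)=e^{(x^2-1)/2}/x$ has strict global minimum $g(1)=1$, and both arguments combine it with the integrality bound $|a_0|\ge 1$ to exclude zeros in $\D$ before invoking Kronecker's theorem. (The paper also sketches an alternative finish via Lindemann's transcendence theorem, but its primary proof is the one you gave.)
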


The next natural question is whether one can find a uniform lower
bound $\|P_n\|_0 \ge c > 1$ for all non-cyclotomic $P_n\in\Z_n[z],\
P_n(0)\neq 0.$ It is especially interesting in view of Lehmer's
conjecture, because of \eqref{1.1}. However, the answer to the
question is negative, as we show with the following example.

\begin{example} \label{ex1.4}
Consider $P_n(z)=nz^n-1.$ It has zeros $z_j,\ j=1,\ldots,n,$ that
are equally spaced on the circumference $|z|=n^{-1/n}.$ Note that
$M(P_n)=n$ and
\[
\|P_n\|_0 = n \exp\left(\frac{n(n^{-2/n}-1)}{2} \right),
\]
by \eqref{1.4}. Since
\[
n^{-2/n} = \exp\left(\frac{-2\log{n}}{n}\right) = 1 -
\frac{2\log{n}}{n} + O\left(\frac{\log^2{n}}{n^2}\right),
\]
we obtain that
\[
\|P_n\|_0 = \exp\left(O\left(\frac{\log^2{n}}{n}\right)\right) \to 1
\quad \mbox{as }n\to\infty.
\]

Similarly, we have for the reciprocal polynomial
$P_{2n}(z)=z^{2n}+nz^n+1$ that
\[
M(P_n) = \frac{n+\sqrt{n^2-4}}{2} \sim n \quad \mbox{as }n\to\infty,
\]
and
\[
\|P_n\|_0 = \frac{n+\sqrt{n^2-4}}{2} \exp\left(\frac{n}{2}\left(
\left( \frac{n-\sqrt{n^2-4}}{2} \right)^{2/n} - 1 \right)\right) \to
1 \quad \mbox{as }n\to\infty.
\]
\end{example}

One may notice that for both sequences of polynomials in this
example the zeros are asymptotically equidistributed near the unit
circumference. This is a part of a more general phenomenon discussed
in the next section.

We conclude this section with a remark on the arithmetic nature of
$\|P_n\|_0.$

\begin{proposition} \label{prop1.5}
If $P_n(z) = \sum_{k=0}^n a_k z^k \in\Z_n[z]$ has at least one zero
in $\D$, then $\|P_n\|_0$ is a transcendental number. Otherwise,
$\|P_n\|_0=M(P_n)=|a_0|$ is an integer.
\end{proposition}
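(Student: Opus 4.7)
The second half of the proposition is immediate: Theorem~\ref{thm1.1} gives $\|P_n\|_0 = M(P_n) = |a_0|$ when $P_n$ has no zeros in $\D$, and $|a_0|$ is an integer because $P_n \in \Z_n[z]$.

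For the first half, my plan is to exploit formula~\eqref{1.4} and appeal to the Lindemann--Weierstrass theorem. I would write
\[
\|P_n\|_0 = M(P_n)\,\exp(\alpha/2), \qquad \alpha := \sum_{|z_j|<1}\bigl(|z_j|^2-1\bigr),
\]
and first verify that both $M(P_n)$ and $\alpha$ are algebraic numbers. The zeros $z_j$ of $P_n$ are algebraic since $P_n \in \Z_n[z]$; consequently $\overline{z_j}$ is algebraic, so $|z_j|^2 = z_j\overline{z_j}$, and thus $|z_j| = \sqrt{|z_j|^2}$, is algebraic. Therefore $\alpha$ is a real algebraic number and $M(P_n) = |a_n|\prod_{|z_j|>1}|z_j|$ is a positive algebraic number.

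The decisive observation is that the hypothesis ``at least one zero in $\D$'' forces $\alpha$ to be \emph{strictly} negative, since every summand $|z_j|^2 - 1$ is negative. So $\alpha/2$ is a nonzero algebraic number, and the Lindemann--Weierstrass theorem then yields that $e^{\alpha/2}$ is transcendental. Since $M(P_n)$ is a positive algebraic number, the product $\|P_n\|_0 = M(P_n)\,e^{\alpha/2}$ must also be transcendental, as a nonzero algebraic multiple of a transcendental number is transcendental.

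I do not anticipate any serious obstacle here: the proof is essentially a direct consequence of Theorem~\ref{thm1.1} combined with Lindemann--Weierstrass. The only point that requires care is the elementary verification that $|z_j|$ is algebraic whenever $z_j$ is, so that the exponent produced by Theorem~\ref{thm1.1} lies in the scope of Lindemann--Weierstrass.
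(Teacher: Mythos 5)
Your proposal is correct and follows essentially the same route as the paper: apply \eqref{1.4}, note that the exponent is a nonzero (indeed strictly negative) algebraic number and that $M(P_n)$ is algebraic, and conclude via Lindemann's theorem that $e^{\alpha/2}$, hence $\|P_n\|_0$, is transcendental. Your explicit remark that the exponent is nonzero because every summand $|z_j|^2-1$ is negative is a point the paper leaves implicit, but otherwise the arguments coincide.
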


\section{Asymptotic zero distribution}

Consider a polynomial $P_n(z)=a_n \prod_{j=1}^n (z-z_j)\in\C_n[z],$
and define its normalized zero counting measure by
\[
\nu_n:= \frac{1}{n} \sum_{j=1}^n \delta_{z_j},
\]
where $\delta_{z_j}$ is the unit pointmass at $z_j.$ Our main result
on the asymptotic zero distribution is as follows.

\begin{theorem} \label{thm2.1}
Suppose that $P_n\in\Z_n[z],\ \deg P_n = n,$ is a sequence of
polynomials without multiple zeros. If $\lim_{n\to\infty}
\|P_n\|_0^{1/n} = 1$ then $\nu_n$ converges to the normalized
arclength measure $d\theta/(2\pi)$ on $\T$ in the weak* topology, as
$n\to\infty.$
\end{theorem}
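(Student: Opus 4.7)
The plan is to reduce Theorem \ref{thm2.1} to a classical potential-theoretic equidistribution argument driven by the integrality of the discriminant. First, I would apply Theorem \ref{thm1.1} to write
\[
\frac{1}{n}\log\|P_n\|_0 = \frac{1}{n}\log M(P_n) + \frac{1}{2n}\sum_{|z_j|<1}(|z_j|^2-1).
\]
Because $P_n\in\Z_n[z]$ with $\deg P_n=n$ forces $M(P_n)\ge 1$, while the sum is nonpositive, the two summands have opposite signs and jointly tend to $0$, so each tends to $0$ separately. Combined with $M(P_n)=|a_n|\prod_{|z_j|>1}|z_j|$ and Mahler's inequality $|a_n|\le M(P_n)$, this yields $|a_n|^{1/n}\to 1$ along with
\[
\int \log^+|w|\,d\nu_n(w)\to 0 \qquad\text{and}\qquad \int_{|w|<1}(1-|w|^2)\,d\nu_n(w)\to 0.
\]

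From these two decay statements, Chebyshev applied to $\log^+|w|$ gives tightness of $\{\nu_n\}$, and any weak* subsequential limit $\mu$ must be supported on $\T$: no mass can lie in $\D$ since $1-|w|^2>0$ there, and no mass can lie outside $\overline{\D}$ since $\log^+|w|>0$ there.

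The heart of the argument is a logarithmic-energy lower bound coming from the discriminant. Since $P_n$ has integer coefficients and no multiple zeros, $|\operatorname{disc}(P_n)|=|a_n|^{2n-2}\prod_{i<j}|z_i-z_j|^2\ge 1$, so
\[
J(\nu_n):=-\frac{1}{n^2}\sum_{i\ne j}\log|z_i-z_j| \;\le\; \frac{2(n-1)}{n^2}\log|a_n|\;\longrightarrow\;0.
\]
For a weak* limit $\mu$ of $\nu_n$ along a subsequence, truncating the kernel as $k_M(z,w):=\min(-\log|z-w|,M)$ and using that $k_M$ is bounded and continuous gives
\[
\iint k_M\,d\mu\,d\mu = \lim_n\iint k_M\,d\nu_n\,d\nu_n \le \liminf_n\bigl(J(\nu_n)+M/n\bigr)\le 0.
\]
This uniform bound in $M$ forces $\mu$ to be atomless (an atom of mass $a$ would contribute $a^2 M$), and monotone convergence then yields the logarithmic-energy bound $I(\mu):=\iint -\log|z-w|\,d\mu\,d\mu\le 0$. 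Since $\supp\mu\subset\T$ and $\operatorname{cap}(\T)=1$, we also have $I(\mu)\ge 0$, with equality only for the equilibrium measure $d\theta/(2\pi)$. Hence every subsequential limit equals $d\theta/(2\pi)$, and tightness promotes this to weak* convergence of the full sequence.

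The main obstacle I expect is the lower-semicontinuity step: the logarithmic kernel blows up on the diagonal, so passing from the discrete bound on $J(\nu_n)$—which ignores diagonal terms—to the genuine energy $I(\mu)$ requires the truncation/monotone-convergence argument above, together with the observation that the same truncation rules out atoms of $\mu$. Everything else is bookkeeping around Theorem \ref{thm1.1} and standard facts about the equilibrium measure of $\T$.
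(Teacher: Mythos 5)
The second half of your argument --- the discriminant bound $|\Delta_n|\ge 1$, the truncated kernel $\min(-\log|z-w|,M)$, monotone convergence to get $I(\mu)\le 0$, and uniqueness of the equilibrium measure of $\T$ --- is exactly the paper's proof and is carried out correctly. The genuine gap is at the very first step. From
\[
\frac{1}{n}\log\|P_n\|_0=\frac{1}{n}\log M(P_n)+\frac{1}{2n}\sum_{|z_j|<1}\left(|z_j|^2-1\right)
\]
you conclude that because the first summand is nonnegative and the second nonpositive while their sum tends to $0$, ``each tends to $0$ separately.'' That inference is invalid: $x_n\equiv 1$, $y_n\equiv -1$ satisfy all three stated hypotheses. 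Nothing in your write-up couples the two summands, and everything downstream --- both displayed decay statements, hence tightness, the support claim, and $|a_n|^{1/n}\to 1$ --- rests on this unproved assertion.

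To repair it you need the identity $M(P_n)=|a_0|\prod_{|z_j|<1}|z_j|^{-1}$, which turns the decomposition into $\|P_n\|_0=|a_0|\prod_{|z_j|<1}g(|z_j|)$ with $g(x)=e^{(x^2-1)/2}/x$ (after factoring out a possible simple zero at the origin, which only costs a factor $e^{-1/2}$). Since $|a_0|\ge 1$ and $g\ge 1$ on $(0,1]$ with $g(x)>1$ strictly for $x<1$, the hypothesis $\|P_n\|_0^{1/n}\to 1$ forces $P_n$ to have $o(n)$ zeros in every disk $\{|z|<r\}$, $r<1$; only then does
\[
\frac{1}{2n}\sum_{|z_j|<1}\left(1-|z_j|^2\right)\le \frac{o(n)}{2n}+\frac{1-r^2}{2}
\]
give, upon letting $n\to\infty$ and then $r\to 1^-$, that the second summand tends to $0$, whence so does $\frac{1}{n}\log M(P_n)$. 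This detour is unavoidable: since $\log g(x)\sim (1-x)^2$ while $1-x^2\sim 2(1-x)$ as $x\to 1^-$, there is no inequality of the form $1-x^2\le C\log g(x)$ that would let you pass directly from $\frac{1}{n}\sum\log g(|z_j|)\to 0$ to $\frac{1}{n}\sum(1-|z_j|^2)\to 0$. Once this step is inserted, the rest of your proof stands as written; the extra remark about atoms is superfluous, since a measure with an atom has infinite logarithmic energy and is already excluded by $I(\mu)\le 0$.
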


This result extends a theorem of Bilu \cite{Bi} for Mahler's
measure, see also Bombieri \cite{Bo} and Rumely \cite{Ru}. From a
more general point of view, Theorem \ref{thm2.1} is a descendant of
Jentzsch's result \cite{Je} on the asymptotic zero distribution of
the partial sums of a power series, and its generalization by
Szeg\H{o} \cite{Sz}. This area was further developed by Erd\H{o}s
and Tur\'an \cite{ET}, and by many others.

As an immediate application of Theorem \ref{thm2.1} we obtain a
result on the growth of $\|P_n\|_0$ for polynomials with restricted
zeros.

\begin{corollary} \label{cor2.2}
Suppose that $P_n\in\Z_n[z],\ \deg P_n = n,$ is a sequence of
polynomials with simple zeros contained in a closed set
$E\subset\C$. If $\T\not\subset E$ then there exists a constant
$C=C(E)>1$ such that
\[
\liminf_{n\to\infty} \|P_n\|_0^{1/n} \ge C > 1.
\]
\end{corollary}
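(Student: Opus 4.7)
The plan is to argue by contradiction, reducing to Theorem \ref{thm2.1} through a diagonal construction, and then using the closedness of $E$ to force $\T \subset E$.

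Suppose no uniform constant $C(E) > 1$ with the stated property exists. Then for every $k \in \N$ there is a sequence $(P_n^{(k)})_{n}$ satisfying the hypotheses of the corollary (degree $n$, integer coefficients, simple zeros in $E$) with
\[
\liminf_{n \to \infty} \|P_n^{(k)}\|_0^{1/n} < 1 + \frac{1}{k}.
\]
A standard diagonal extraction then yields indices $n_1 < n_2 < \cdots$ (with $n_k \to \infty$) and polynomials $Q_k := P_{n_k}^{(k)} \in \Z_{n_k}[z]$ of degree $n_k$ with simple zeros in $E$, satisfying $\|Q_k\|_0^{1/n_k} \to 1$. This diagonal sequence meets the hypotheses of Theorem \ref{thm2.1}.

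Applying Theorem \ref{thm2.1} to $(Q_k)$, the normalized zero counting measures $\nu_{n_k}$ converge in the weak* topology to the normalized arclength measure $d\theta/(2\pi)$ on $\T$. On the other hand, every $\nu_{n_k}$ is a probability measure supported in the closed set $E$. Since the limit $d\theta/(2\pi)$ is a probability measure on $\C$ (so no mass escapes to infinity), the Portmanteau theorem gives
\[
\frac{1}{2\pi}\,d\theta(E) \ge \limsup_{k\to\infty} \nu_{n_k}(E) = 1,
\]
so the support $\T$ of $d\theta/(2\pi)$ is contained in $E$, contradicting $\T \not\subset E$.

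The main obstacle I anticipate is the passage to uniformity in the sequence: the corollary demands a constant $C$ depending only on $E$, not on the particular sequence, which is precisely why the double-indexed diagonal argument is required. A secondary, milder concern is that for unbounded $E$ one must rule out loss of mass at infinity in the weak* limit, but this is automatic here because the limiting measure $d\theta/(2\pi)$ itself has total mass one.
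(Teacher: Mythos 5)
Your argument is correct and is essentially the paper's: the paper offers no separate proof of Corollary \ref{cor2.2}, presenting it as an immediate consequence of Theorem \ref{thm2.1}, and your contradiction-plus-support argument (with the diagonal extraction supplying the uniformity of $C(E)$ over all admissible sequences, and the tightness remark justifying the Portmanteau step for unbounded $E$) is the natural way to make that deduction explicit. The only details worth adding are that $\liminf_{k\to\infty} \|Q_k\|_0^{1/n_k} \ge 1$ (needed to conclude that the limit equals $1$, not just that the $\limsup$ is at most $1$) follows from \eqref{1.2}--\eqref{1.3} after factoring out a possible simple zero at the origin, and that you are invoking Theorem \ref{thm2.1} along a sparse subsequence of degrees $n_k$, which its proof permits verbatim even though the statement is phrased for a full sequence indexed by degree.
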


This exhibits the geometric growth of $\|P_n\|_0$ for many families
of polynomials such as polynomials with real zeros, polynomials with
zeros in a sector, etc. Corresponding results with explicit bounds
for Mahler's measure were obtained by Schinzel \cite{Sc}, Langevin
\cite{La1, La2, La3}, Mignotte \cite{Mi}, Rhin and Smyth
\cite{RhSm}, Dubickas and Smyth \cite{DuSm}, and others.

In a somewhat different direction, we have the following result on
the asymptotic behavior of zeros.

\begin{theorem} \label{thm2.3}
Suppose that $P_n(z)=a_nz^n+\ldots+a_0\in\C_n[z],\ |a_0| \ge 1, \
n\in\N,$ is a sequence of polynomials.\\

(a) If $\lim_{n\to\infty} \|P_n\|_0 = 1$ then
\begin{align} \label{2.1}
\liminf_{n\to\infty} \min_{1 \le j \le n} |z_j| \ge 1.
\end{align}

(b) If $|a_n|\ge 1$ and $\lim_{n\to\infty} M(P_n)=1,$ then
\begin{align} \label{2.2}
\lim_{n\to\infty} \min_{1 \le j \le n} |z_j| = \lim_{n\to\infty}
\max_{1 \le j \le n} |z_j| = 1.
\end{align}
\end{theorem}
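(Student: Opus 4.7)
\medskip

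\noindent\textbf{Proof proposal.} My plan is to reduce both parts to two elementary ingredients: Jensen's formula $|a_0|=|a_n|\prod_j|z_j|$, and the exact identity from Theorem~\ref{thm1.1}. The extra hypotheses $|a_0|\ge 1$ (and $|a_n|\ge 1$ in part (b)) then let me convert the assumed limit into a strong constraint on the moduli of the $z_j$.

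For part (a), combine Theorem~\ref{thm1.1} with Jensen's formula to eliminate $|a_n|$: since $M(P_n)=|a_n|\prod_{|z_j|>1}|z_j|=|a_0|\big/\prod_{|z_j|\le 1}|z_j|$, I obtain
\[
\|P_n\|_0 \;=\; \frac{|a_0|}{\prod_{|z_j|\le 1}|z_j|}\,\exp\!\left(\tfrac12\sum_{|z_j|<1}(|z_j|^2-1)\right) \;\ge\; \prod_{|z_j|<1} g(|z_j|),
\]
where $g(r):=e^{(r^2-1)/2}/r$. A one-line calculus check gives $(\log g)'(r)=(r^2-1)/r<0$ on $(0,1)$, so $g$ is strictly decreasing on $(0,1]$ with $g(1)=1$ and $g(r)\to\infty$ as $r\to 0^+$; in particular every factor in the product is $\ge 1$. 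Now suppose toward a contradiction that $\liminf_n\min_j|z_j|=r<1$; pass to a subsequence along which some zero has $|z_{j_n}|\to r$. Then the corresponding factor satisfies $g(|z_{j_n}|)\to g(r)>1$ (or $\to\infty$ if $r=0$), so $\|P_n\|_0\ge g(|z_{j_n}|)$ stays bounded away from $1$, contradicting $\|P_n\|_0\to 1$.

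For part (b), the hypothesis $|a_n|\ge 1$ turns Jensen's formula for $M(P_n)$ into $\prod_{|z_j|>1}|z_j|\le M(P_n)\to 1$. Consider two cases for each $n$. If $\max_j|z_j|>1$, then $\max_j|z_j|\le\prod_{|z_j|>1}|z_j|\to 1$, so $\max_j|z_j|\to 1$ along this part of the sequence. If instead $\max_j|z_j|\le 1$, then $M(P_n)=|a_n|$ forces $|a_n|\to 1$, and Jensen combined with $|a_0|\ge 1$ gives $\prod_j|z_j|\ge 1/|a_n|\to 1$; since every factor is $\le 1$, we get $-\log\min_j|z_j|\le\sum_j(-\log|z_j|)\to 0$, hence $\min_j|z_j|\to 1$ and therefore also $\max_j|z_j|\to 1$. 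Either way $\max_j|z_j|\to 1$. To obtain $\min_j|z_j|\to 1$, apply the same argument to the reciprocal polynomial $P_n^*(z):=z^nP_n(1/z)$, whose leading and constant coefficients are $a_0$ and $a_n$ (both $\ge 1$ in modulus), whose zeros are $1/z_j$, and which satisfies $M(P_n^*)=M(P_n)\to 1$; the conclusion $\max_j|1/z_j|\to 1$ is exactly $\min_j|z_j|\to 1$.

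The main (minor) obstacle is bookkeeping in part (b): one has to treat separately the subsequences on which $P_n$ has, or does not have, zeros outside $\D$, because only in the first case does $M(P_n)$ directly control the outer zeros. Everything else is a direct consequence of Theorem~\ref{thm1.1}, Jensen's formula, and the monotonicity of the auxiliary function $g$.
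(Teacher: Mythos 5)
Your proof is correct. Part (a) is essentially the paper's argument: both of you reduce to the identity $\|P_n\|_0=|a_0|\prod_{|z_j|<1}g(|z_j|)$ with $g(x)=e^{(x^2-1)/2}/x$ (the paper's \eqref{6.1}), use $|a_0|\ge 1$ to drop the constant term, and exploit the strict monotonicity of $g$ on $(0,1)$ with $g(1)=1$. Part (b) is where you genuinely diverge. The paper first observes $1\le |a_0|\le \|P_n\|_0\le M(P_n)\to 1$, so that part (a) already delivers the lower bound on the zeros, and then controls the maximum by noting that every zero outside $\overline{\mathbb{D}}$ is dominated by $|a_n|\prod_{|z_j|>1}|z_j|=M(P_n)$. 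You instead stay entirely within the Mahler-measure/Jensen framework: you get $\max_j|z_j|\to 1$ by a two-case analysis (zeros outside the closed disk are bounded by $M(P_n)$ thanks to $|a_n|\ge 1$; if all zeros lie in $\overline{\mathbb{D}}$ then $M(P_n)=|a_n|$ together with $|a_0|\ge 1$ forces $\prod_j|z_j|\to 1$ and hence $\min_j|z_j|\to 1$), and you then obtain $\min_j|z_j|\to 1$ by running the identical argument on the reciprocal polynomial $z^nP_n(1/z)$, whose coefficient hypotheses are exactly the swapped ones. This is a clean and arguably more elementary route for (b) --- it never invokes the areal norm or part (a) --- at the cost of some extra subsequence bookkeeping, which you handle correctly; the paper's version is shorter because it recycles part (a) and inequalities \eqref{1.1}--\eqref{1.2}. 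Both arguments are complete.
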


Thus part (a) of Theorem \ref{thm2.3} indicates that all zeros of
$P_n$ are pushed out of $\D$ as $n\to\infty,$ while in part (b) they
all tend to the unit circumference.

\section{Polynomial inequalities}

We discuss some general polynomial inequalities related to $M(P_n)$
and $\|P_n\|_0$ in this section. For a polynomial
$\Lambda_n(z)=\sum_{k=0}^n \lambda_k \binom{n}{k} z^k \in \C_n[z],$
consider the Szeg\H{o} composition with $P_n(z)=\sum_{k=0}^n a_k z^k
\in \C_n[z]:$
\begin{align} \label{3.1}
\Lambda P_n(z) := \sum_{k=0}^n \lambda_k a_k z^k.
\end{align}
If $\Lambda_n$ is a fixed polynomial, then $\Lambda P_n$ is a
multiplier operator acting on $P_n.$ More information on history and
applications of this composition may be found in \cite{dBS},
\cite{Ar1}, \cite{Ar2} and \cite{RS}. De Bruijn and Springer
\cite{dBS} proved a very interesting general inequality
\begin{align} \label{3.2}
M(\Lambda P_n) \le M(\Lambda_n) M(P_n),
\end{align}
which did not receive the attention it truly deserves. In
particular, it contains the inequality
\[
M(P_n') \le n M(P_n)
\]
that is usually attributed to Mahler, who proved it later in
\cite{Ma}. To see this, just note that if
$\Lambda_n(z)=nz(1+z)^{n-1} = \sum_{k=0}^n k \binom{n}{k} z^k,$ then
$\Lambda P_n(z)=zP_n'(z)$ and $M(\Lambda_n)=n.$ Furthermore,
\eqref{3.2} immediately answers a question about a lower bound for
Mahler's measure of derivative raised in \cite[pp. 12 and 194]{EW},
see \cite{St}. For $P_n'(z)=\sum_{k=0}^{n-1} a_k z^k,$ write
\begin{align*}
\frac{1}{z}\left(P_n(z)-P_n(0)\right) = \sum_{k=0}^{n-1}
\frac{a_k}{k+1} z^k = \Lambda P_n'(z),
\end{align*}
where
\[
\Lambda_{n-1}(z) = \sum_{k=0}^{n-1} \frac{1}{k+1} \binom{n-1}{k} z^k
= \frac{(1+z)^n-1}{nz}.
\]
The result of de Bruijn and Springer gives
\[
M(P_n(z)-P_n(0)) \le M(\Lambda_{n-1})\, M(P_n'),
\]
with
\[
M(\Lambda_{n-1}) = \frac{1}{n} M\left((1+z)^n-1\right) = \frac{1}{n}
\prod_{n/6<k<5n/6} 2\sin\frac{k\pi}{n}.
\]
There are many other interesting consequences of \eqref{3.2}, which
we leave for the reader.

We obtain the following generalization of \eqref{3.2} for
$\|P_n\|_0.$

\begin{theorem} \label{thm3.1}
For any $\Lambda_n\in\C_n[z]$ and any $P_n\in\C_n[z],$ we have
\begin{align} \label{3.3}
\|\Lambda P_n\|_0 \le M(\Lambda_n) \|P_n\|_0.
\end{align}
\end{theorem}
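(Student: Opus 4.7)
The plan is to deduce \eqref{3.3} from the de Bruijn--Springer inequality \eqref{3.2} by slicing the disk into concentric circles and applying the known Mahler-measure inequality on each slice.

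First I would record the radial slicing identity for the Bergman height. For any nonzero $Q \in \C_n[z]$ and $0<r\le 1$, write $Q_r(z) := Q(rz) \in \C_n[z]$. Polar coordinates give
\[
\log \|Q\|_0 \;=\; \frac{1}{\pi} \iint_\D \log|Q(z)|\,dA \;=\; 2\int_0^1 r\,\log M(Q_r)\,dr,
\]
where $M(Q_r)$ denotes the ordinary Mahler measure of $Q_r\in\C_n[z]$. I would apply this identity to both $Q=P_n$ and $Q=\Lambda P_n$.

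The key structural observation is that radial dilation commutes with Szeg\H{o} composition. Since $P_{n,r}(z)=\sum_{k=0}^n a_k r^k z^k$, the definition \eqref{3.1} applied coefficientwise gives
\[
\Lambda(P_{n,r})(z) \;=\; \sum_{k=0}^n \lambda_k a_k r^k z^k \;=\; (\Lambda P_n)(rz) \;=\; (\Lambda P_n)_r(z).
\]
This elementary identity converts the Bergman estimate into a fiberwise Mahler estimate.

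With these two ingredients in place the remainder is formal. Both $\Lambda_n$ and $P_{n,r}$ lie in $\C_n[z]$, so the de Bruijn--Springer inequality \eqref{3.2}, applied on each slice, yields
\[
M\bigl((\Lambda P_n)_r\bigr) \;=\; M\bigl(\Lambda(P_{n,r})\bigr) \;\le\; M(\Lambda_n)\, M(P_{n,r}), \qquad 0<r\le 1.
\]
Taking logarithms, multiplying by $2r$, integrating over $[0,1]$, and using $2\int_0^1 r\,dr = 1$ to absorb the constant $\log M(\Lambda_n)$, the slicing identity produces $\log\|\Lambda P_n\|_0 \le \log M(\Lambda_n) + \log\|P_n\|_0$, which is precisely \eqref{3.3}. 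The substantive step is the coefficientwise identity $\Lambda(P_{n,r})=(\Lambda P_n)_r$; there is no genuine obstacle beyond packaging the de Bruijn--Springer estimate into an area integral by Fubini.
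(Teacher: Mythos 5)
Your proof is correct and is essentially the paper's own argument: the paper likewise applies the de Bruijn--Springer inequality \eqref{3.2} to $P_n(rz)$ on each circle $|z|=r$ and integrates against $2r\,dr$. You merely make explicit the commutation identity $\Lambda(P_{n,r})=(\Lambda P_n)_r$, which the paper uses implicitly.
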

Note that equality holds in \eqref{3.2} and \eqref{3.3} for any
polynomial $P_n\in\C_n[z]$ when $\Lambda_n(z)=(1+z)^n=\sum_{k=0}^n
\binom{n}{k} z^k,$ because $\Lambda P_n\equiv P_n$ and
$M((1+z)^n)=1.$ This inequality allows to treat many problems in a
unified way, and it has interesting corollaries stated below.

First, we mention an analog of the de Bruijn-Springer-Mahler
inequality.
\begin{corollary} \label{cor3.2}
For any $P_n\in\C_n[z]$, we have that
\begin{align} \label{3.4}
\|zP_n'\|_0 \le n \|P_n\|_0
\end{align}
and
\begin{align} \label{3.5}
\|P_n'\|_0 \le \sqrt{e}\, n\, \|P_n\|_0,
\end{align}
where equality holds for $P_n(z)=z^n.$
\end{corollary}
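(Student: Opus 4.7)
The plan is to derive both inequalities as direct consequences of Theorem \ref{thm3.1}, paralleling de Bruijn--Springer's argument for Mahler's measure.

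For \eqref{3.4}, I would choose the same multiplier as in the Mahler analog, namely $\Lambda_n(z) = nz(1+z)^{n-1} = \sum_{k=0}^n k\binom{n}{k}z^k$, so that $\Lambda P_n(z) = zP_n'(z)$. The multiplicativity of Mahler's measure together with $M(z)=1$ and $M(1+z)=1$ (the roots lie on the closed unit disk with leading coefficient $1$) gives $M(\Lambda_n) = n$. Plugging into \eqref{3.3} yields $\|zP_n'\|_0 \le n\|P_n\|_0$.

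For \eqref{3.5}, the key observation is that $\|\cdot\|_0$ is multiplicative: since
\[
\log\|PQ\|_0 = \frac{1}{\pi}\iint_\D \log|P(z)Q(z)|\,dA = \log\|P\|_0 + \log\|Q\|_0,
\]
we have $\|zP_n'\|_0 = \|z\|_0\,\|P_n'\|_0$. A direct polar-coordinate computation gives
\[
\log\|z\|_0 = \frac{1}{\pi}\int_0^{2\pi}\!\!\int_0^1 (\log r)\,r\,dr\,d\theta = 2\int_0^1 r\log r\,dr = -\frac{1}{2},
\]
so $\|z\|_0 = e^{-1/2}$. Dividing \eqref{3.4} by this factor produces $\|P_n'\|_0 \le \sqrt{e}\,n\,\|P_n\|_0$.

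For the equality case, I would check $P_n(z)=z^n$ directly: here $\|P_n\|_0 = (\|z\|_0)^n = e^{-n/2}$ and $\|P_n'\|_0 = n\|z^{n-1}\|_0 = n e^{-(n-1)/2}$, so that $\|P_n'\|_0 = \sqrt{e}\,n\,\|P_n\|_0$ as claimed. There is no real obstacle here beyond knowing the single integral $\iint_\D \log|z|\,dA = -\pi/2$ and invoking Theorem \ref{thm3.1}; the mild subtlety is merely recognizing that one cannot read \eqref{3.5} off \eqref{3.4} without first paying attention to the constant $\|z\|_0 = 1/\sqrt{e}$, which is precisely what produces the factor $\sqrt{e}$.
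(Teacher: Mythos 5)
Your proof is correct and follows essentially the same route as the paper: apply Theorem \ref{thm3.1} with $\Lambda_n(z)=nz(1+z)^{n-1}$ to get \eqref{3.4}, then use multiplicativity and $\|z\|_0=e^{-1/2}$ to pass to \eqref{3.5}. The explicit computation of $\|z\|_0$ and the verification of the equality case for $z^n$ are correct details that the paper leaves implicit.
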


Another consequence relates $\|P_n\|_0$ to the coefficients of
$P_n.$
\begin{corollary} \label{cor3.3}
If $P_n(z)=\sum_{k=0}^n a_k z^k \in \C_n[z]$ then
\begin{align} \label{3.6}
|a_k| \le  e^{k/2}\binom{n}{k}\, \|P_n\|_0,\quad k=0,\ldots,n.
\end{align}
\end{corollary}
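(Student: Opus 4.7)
The plan is to apply Theorem \ref{thm3.1} with a well-chosen multiplier polynomial $\Lambda_n$. To isolate the $k$-th coefficient $a_k$ of $P_n$ via the Szeg\H{o} composition, I would take
\[
\Lambda_n(z) = \binom{n}{k} z^k,
\]
so that in the expansion $\Lambda_n(z)=\sum_{j=0}^n \lambda_j \binom{n}{j} z^j$ we have $\lambda_k = 1$ and $\lambda_j = 0$ for $j \neq k$. Then by the definition of the Szeg\H{o} composition in \eqref{3.1}, $\Lambda P_n(z) = a_k z^k$, and a direct computation gives $M(\Lambda_n) = \binom{n}{k}$.

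Next I would compute $\|z^k\|_0$ explicitly. Passing to polar coordinates,
\[
\frac{1}{\pi}\iint_{\D}\log|z|\,dA = 2\int_0^1 r\log r\, dr = -\frac{1}{2},
\]
so $\|z^k\|_0 = e^{-k/2}$, and by the multiplicative property $\|\Lambda P_n\|_0 = \|a_k z^k\|_0 = |a_k|\, e^{-k/2}$.

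Inserting these two evaluations into \eqref{3.3} yields
\[
|a_k|\, e^{-k/2} = \|\Lambda P_n\|_0 \le M(\Lambda_n)\,\|P_n\|_0 = \binom{n}{k}\,\|P_n\|_0,
\]
which rearranges to \eqref{3.6}. There is no real obstacle here, since the heart of the argument is packaged inside Theorem \ref{thm3.1}; the only substantive step is recognizing the correct choice of $\Lambda_n$ and recording the elementary value $\|z^k\|_0 = e^{-k/2}$.
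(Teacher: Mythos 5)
Your proposal is correct and follows exactly the paper's own argument: choose $\Lambda_n(z)=\binom{n}{k}z^k$, note $\Lambda P_n(z)=a_kz^k$ and $M(\Lambda_n)=\binom{n}{k}$, compute $\|z^k\|_0=e^{-k/2}$, and apply Theorem \ref{thm3.1}. The explicit polar-coordinate evaluation of $\|z^k\|_0$ is a nice touch but otherwise the two proofs are identical.
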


Recall that we have $|a_k| \le \binom{n}{k} M(P_n)$ for Mahler's
measure (see, e.g., \cite{EW}), which follows from \eqref{3.2} by
letting $\Lambda_n(z)=\binom{n}{k} z^k$. One can certainly continue
with a list of corollaries by choosing proper polynomials
$\Lambda_n.$

\section{Approximation by polynomials with integer coefficients}

We consider a related question of approximation by polynomials with
integer coefficients on the unit disk. There is a well known
condition necessary for approximation by integer polynomials in
essentially any norm on $\D$.

\begin{proposition} \label{prop4.1}
Suppose that $P_n\in\Z_n[z],\ n\in\N,$ converge to $f$ uniformly on
compact subsets of $\D$. Then  $f$ is analytic in $\D$ and
$f^{(k)}(0)/k! \in \Z \
 \forall\, k\ge 0,\ k\in\Z.$
\end{proposition}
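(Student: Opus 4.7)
My plan has two components: establish analyticity of $f$, then establish the integrality of each Taylor coefficient $f^{(k)}(0)/k!$.

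For analyticity, I would invoke the standard Weierstrass theorem: a locally uniform limit of holomorphic functions on an open set is holomorphic there. Since each $P_n$ is entire and the convergence $P_n \to f$ is uniform on compact subsets of $\D$, the limit $f$ is holomorphic on $\D$. Equivalently, one can check this by Morera's theorem, passing the integral $\oint_\gamma P_n = 0$ to the limit along any closed contour $\gamma \subset \D$.

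For the coefficient condition, I would write $P_n(z) = \sum_{k=0}^n a_k^{(n)} z^k$ with $a_k^{(n)} \in \Z$, and fix $k \ge 0$. Choose any radius $r \in (0,1)$ and apply Cauchy's formula on the circle $|z|=r$, which lies in a compact subset of $\D$ on which $P_n \to f$ uniformly. Then
\[
\frac{f^{(k)}(0)}{k!} = \frac{1}{2\pi i} \oint_{|z|=r} \frac{f(z)}{z^{k+1}}\,dz = \lim_{n\to\infty} \frac{1}{2\pi i} \oint_{|z|=r} \frac{P_n(z)}{z^{k+1}}\,dz = \lim_{n\to\infty} a_k^{(n)},
\]
since $a_k^{(n)}$ is precisely the $k$-th Taylor coefficient of $P_n$ for every $n \ge k$ (and one may set $a_k^{(n)}=0$ for $n<k$, still an integer). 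Thus the integer sequence $\{a_k^{(n)}\}_{n \ge k}$ converges in $\C$.

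The final step is the observation that a convergent sequence of integers is eventually constant, so its limit lies in $\Z$. Hence $f^{(k)}(0)/k! \in \Z$ for every $k \ge 0$. There is no serious obstacle here; the only point requiring mild care is the index shift for $n<k$, and the fact that one must interchange limit and contour integral, which is justified by the uniform convergence on the fixed circle $|z|=r$.
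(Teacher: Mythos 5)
Your proof is correct and follows essentially the same route as the paper: both rest on the Weierstrass convergence theorem for analyticity and on the fact that the $k$-th coefficient of $P_n$ (an integer) converges to $f^{(k)}(0)/k!$, so the limit of an eventually constant integer sequence is an integer. The only cosmetic difference is that you extract the coefficient via Cauchy's formula on a fixed circle, while the paper quotes the locally uniform convergence of the derivatives $P_n^{(k)}\to f^{(k)}$ and evaluates at $0$; these are the same argument in different clothing.
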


This necessary condition for the convergence is clearly equivalent
to the fact that the power series expansion of $f$ at the origin has
integer coefficients.

Define the Hardy space norm on $\D$ by
\[
\|P_n\|_{H^p} := \left(\frac{1}{2\pi} \int_0^{2\pi}
|P_n(e^{i\theta})|^p\,d\theta \right)^{1/p}, \quad 0<p<\infty.
\]
It is well known that approximation by polynomials with integer
coefficients is possible in $H^p$ only in the trivial case, see
\cite{Fer} and \cite{Tri}. More precisely, we have

\begin{proposition} \label{prop4.2}
Suppose that $f\in H^p,\ 0 < p \le \infty.$ If $P_n\in\Z_n[z],\
n\in\N$, satisfy
\begin{align} \label{4.1}
\lim_{n\to\infty} \|f-P_n\|_{H^p} = 0,
\end{align}
then $f$ is a polynomial with integer coefficients.
\end{proposition}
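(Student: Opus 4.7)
The plan is to reduce the statement to a telescoping argument on the integer polynomials $D_n := P_{n+1} - P_n$ and exploit the fact that a nonzero integer polynomial has Mahler measure at least one, which is a lower bound for every $H^p$ norm.

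First I would show that $\|D_n\|_{H^p}\to 0$ as $n\to\infty$. For $1\le p\le\infty$ this is just the triangle inequality applied to $D_n = (P_{n+1}-f)+(f-P_n)$. For $0<p<1$, the $H^p$ ``norm'' is only a quasi-norm, but $\|\cdot\|_{H^p}^p$ is subadditive (equivalently, $d(g,h):=\|g-h\|_{H^p}^p$ is a translation-invariant metric), so
\[
\|D_n\|_{H^p}^p \le \|P_{n+1}-f\|_{H^p}^p + \|f-P_n\|_{H^p}^p \longrightarrow 0.
\]

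Next I would compare $M$ and $\|\cdot\|_{H^p}$. For any polynomial $Q\not\equiv 0$, the concavity of the logarithm (Jensen's inequality applied to the probability measure $d\theta/(2\pi)$ on $\T$) gives
\[
\log M(Q) = \frac{1}{2\pi}\int_0^{2\pi}\log|Q(e^{i\theta})|\,d\theta
\le \log\left(\frac{1}{2\pi}\int_0^{2\pi}|Q(e^{i\theta})|^p\,d\theta\right)^{1/p} = \log\|Q\|_{H^p}
\]
for $0<p<\infty$, and $M(Q)\le\|Q\|_{H^\infty}$ is trivial. Applying this to $Q=D_n$ yields $M(D_n)\le \|D_n\|_{H^p}\to 0$.

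On the other hand, because $D_n\in\Z_{n+1}[z]$, whenever $D_n\not\equiv 0$ its leading coefficient is a nonzero integer, so
\[
M(D_n) \;=\; |\text{lead}(D_n)|\prod_{|z_j|>1}|z_j| \;\ge\; 1.
\]
Combined with $M(D_n)\to 0$, this forces $D_n\equiv 0$ for all sufficiently large $n$. Hence the sequence $(P_n)$ is eventually constant, equal to some $P\in\Z[z]$, and $f=P$ follows immediately from $\|f-P_n\|_{H^p}\to 0$.

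The only subtle point is the $0<p<1$ case, where the failure of the triangle inequality in the strict sense must be handled by $p$-subadditivity as above; every other step is the same throughout the range $0<p\le\infty$. With this small adjustment the argument is entirely uniform and short.
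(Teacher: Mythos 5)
Your proposal is correct and follows essentially the same route as the paper: telescope to the integer polynomials $P_{n+1}-P_n$, show their $H^p$ norms tend to zero (using $p$-subadditivity when $0<p<1$), and then invoke a lower bound of $1$ for the $H^p$ norm of a nonzero integer polynomial to force the differences to vanish eventually. The only cosmetic difference is in that last lower bound: the paper extracts the lowest nonzero coefficient and applies the mean value inequality, whereas you go through $M(Q)\le\|Q\|_{H^p}$ (Jensen) and $M(Q)\ge|\mathrm{lead}(Q)|\ge 1$; both are valid and equally short.
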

It appears an open question whether this proposition is true for
$p=0$, i.e. for approximation of functions in Mahler's measure.
Generally, nontrivial approximation by integer polynomials in the
supremum norm is valid on sets with transfinite diameter (capacity)
less than 1 \cite{Fer,Tri, FerN}, and it is not possible if the
transfinite diameter is greater than or equal to 1. But the
transfinite diameter of $\D$ is exactly equal to 1, so that we deal
with a borderline case. However, we show that the Bergman space
$A^p$ is different from the Hardy space $H^p$ in this regard, as it
does allow approximation by polynomials with integer coefficients.

\begin{theorem} \label{thm4.3}
Suppose that $f\in A^p,\ 1<p<\infty.$ We have
\begin{align} \label{4.2}
\lim_{n\to\infty} \|f-P_n\|_p = 0,
\end{align}
for a sequence of polynomials $P_n\in\Z_n[z],\ n\in\N$, if and only
if $f$ has a power series expansion about $z=0$ with integer
coefficients. Clearly, this is equivalent to $f^{(k)}(0)/k! \in \Z \
 \forall\, k\ge 0,\ k\in\Z.$
\end{theorem}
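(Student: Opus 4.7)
The plan is to prove necessity and sufficiency separately, with almost all of the work in the latter.

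For necessity, I would first observe that $A^p$-convergence with $1<p<\infty$ implies uniform convergence on compact subsets of $\D$: for any compact $F\subset\D$ and $z\in F$, the Bergman reproducing property together with H\"older's inequality yields $|f(z)-P_n(z)|\le C(F)\,\|f-P_n\|_p$, because the reproducing kernel $w\mapsto(1-z\bar w)^{-2}$ has its $L^{p'}$-norm (with respect to the normalized area measure) bounded uniformly for $z\in F$. Proposition \ref{prop4.1} then forces each Taylor coefficient of $f$ at the origin to be a limit of integer coefficients of the $P_n$, hence an integer itself.

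For sufficiency, given $f(z)=\sum_{k=0}^\infty a_k z^k\in A^p$ with all $a_k\in\Z$, the natural candidates are the Taylor partial sums $S_Nf(z):=\sum_{k=0}^N a_k z^k$, which automatically lie in $\Z_N[z]$. The entire problem thus reduces to showing $\|f-S_Nf\|_p\to 0$ for every $f\in A^p$ when $1<p<\infty$. The core step is the uniform bound $\|S_Nf\|_p\le C_p\|f\|_p$ with $C_p$ independent of $N$ and $f$. Setting $f_r(\theta):=f(re^{i\theta})$, the function $S_Nf(re^{i\theta})$ is precisely the $N$-th partial Fourier sum of $f_r$ in $\theta$; the M.\ Riesz theorem on $L^p(\T)$, valid precisely for $1<p<\infty$, gives such a bound fiberwise in $r$ with a constant independent of~$r$. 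Multiplying by $r$ and integrating over $r\in[0,1]$ delivers the $A^p$-bound via Fubini. Convergence $S_Nf\to f$ in $A^p$ now follows by a standard three-$\varepsilon$ argument, since polynomials are dense in $A^p$ for $1\le p<\infty$ (approximate $f$ first by its dilate $f(r\cdot)$ as $r\to 1^-$, then by a Taylor polynomial of this dilate) and $S_N$ fixes every polynomial of degree $\le N$.

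I expect no substantial technical obstruction beyond identifying the right tool: once one invokes the $L^p$-boundedness of the Riesz projection on the circle, the rest is routine Fubini plus density. The exclusion of the endpoints $p=1,\infty$ in the theorem matches exactly the known failure of M.\ Riesz there, which confirms that the stated range $1<p<\infty$ is essential for this line of argument.
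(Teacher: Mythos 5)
Your proof is correct and follows the same skeleton as the paper's: necessity via locally uniform convergence plus Proposition \ref{prop4.1}, sufficiency via Taylor partial sums. The differences are in how the two key steps are justified. For necessity, the paper applies the area mean value inequality to the subharmonic function $|f-P_n|^p$ on disks $|t-z|<1-|z|$, which is more elementary than your reproducing-kernel-plus-H\"older argument and, notably, works for all $p>0$ rather than only $p>1$ (this matters in light of the remark after the theorem that the case $0\le p\le 1$ is open --- it is only the sufficiency direction that genuinely needs $p>1$). For sufficiency, the paper simply cites Theorem 4 of Duren--Schuster for the convergence of partial sums in $A^p$, $1<p<\infty$; you instead reprove that result from scratch via the fiberwise M.~Riesz projection bound on circles $|z|=r$, Fubini, and density of polynomials --- which is essentially the standard proof of the cited theorem. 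Your version is self-contained and makes transparent exactly where the restriction $1<p<\infty$ enters (the failure of M.~Riesz at the endpoints), at the cost of some length; the paper's is shorter but leans on the reference. Both are complete and correct.
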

Thus there are many functions in $A^p$ that can be approximated by
polynomials with integer coefficients. In fact, one can use partial
sums of the power series for this purpose, see the proof of Theorem
\ref{thm4.3}. However, we do not know whether Theorem \ref{thm4.3}
is valid in the case $0\le p\le 1$. Note that if $f\in A^p,\ p>1,$
has a Taylor expansion with integer coefficients, then $f\in A^q$
for any $q\in[0,p)$ and the partial sums $P_n$ of this expansion
satisfy $\|f-P_n\|_q \le \|f-P_n\|_p \to 0$ as $n\to\infty.$

\section{Multivariate polynomials}

The definition of $\|P_n\|_0$ is easily generalized to the case of
multivariate polynomials $P_n(z_1,\ldots,z_d)$ as follows:
\[
\|P_n\|_0 := \exp\left(\frac{1}{\pi^d} \int_{\D}\ldots\int_{\D}
\log|P_n(z_1,\ldots,z_d)| \,dA(z_1)\ldots dA(z_d)\right).
\]
It is also parallel to multivariate Mahler's measure
\[
M(P_n) := \exp\left(\frac{1}{(2\pi)^d} \int_{\T}\ldots\int_{\T}
\log|P_n(z_1,\ldots,z_d)| \,|d z_1|\ldots |d z_d|\right).
\]
We note that many of the properties of $\|P_n\|_0$ are preserved in
the multivariate case. Thus it still defines a multiplicative height
on the space of polynomials. If $P_n$ is a polynomial with complex
coefficients and the constant term $a_0,$ then we can apply the area
mean value inequality to the (pluri)subharmonic function
$\log|P_n(z_1,\ldots,z_d)|$ in each variable, which gives together
with Fubini's theorem that
\[
\|P_n\|_0 \ge |a_0|.
\]
Furthermore, the above inequality turns into equality if
$P_n(z_1,\ldots,z_d)\neq 0$ on $\D^d,$ by the area mean value
theorem for the (pluri)harmonic function
$\log|P_n(z_1,\ldots,z_d)|$. However, it is rather unlikely that
some kind of explicit relation such as \eqref{1.4} exists for
general multivariate polynomials.

We now state an estimate generalizing Corollary \ref{cor1.2}.
\begin{proposition} \label{prop5.1}
For a polynomial
\begin{align} \label{5.1}
P_n(z_1,\ldots,z_d) = \sum_{k_1+\ldots+k_d\le n} a_{k_1\ldots k_d}
z_1^{k_1} \ldots z_d^{k_d}
\end{align}
of degree at most $n$ with complex coefficients, we have
\begin{align} \label{5.2}
e^{-n/2} M(P_n) \le \|P_n\|_0 \le M(P_n).
\end{align}
Equality holds in the lower estimate for any $P_n(z_1,\ldots,z_d) =
a_{k_1\ldots k_d} z_1^{k_1} \ldots z_d^{k_d}$ with $k_1+\ldots+k_d =
n.$ The upper estimate turns into equality for any polynomial not
vanishing in $\D^d.$
\end{proposition}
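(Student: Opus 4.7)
The plan is to prove the upper and lower bounds separately.

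For the upper bound $\|P_n\|_0 \le M(P_n)$, I note that $\log|P_n|$ is plurisubharmonic, hence subharmonic in each variable with the others fixed. The one-variable inequality $\tfrac{1}{\pi}\iint_\D u\,dA \le \tfrac{1}{2\pi}\int_\T u\,d\mu$ for subharmonic $u$ on $\overline\D$ therefore applies in each coordinate, and iterating it in turn (combined with Fubini) yields $\log\|P_n\|_0 \le \log M(P_n)$.

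For the lower bound, I would reduce to Corollary~\ref{cor1.2} via a one-dimensional complex-line slice through the origin. For each $w=(w_1,\ldots,w_d)\in\T^d$, set $Q_w(t):=P_n(tw_1,\ldots,tw_d)$; this is a polynomial in the scalar variable $t$ of degree at most $n$, because $P_n$ has total degree at most $n$. Corollary~\ref{cor1.2} then gives $\log\|Q_w\|_0 \ge \log M(Q_w) - n/2$. Integrating against the normalized Haar measure $d\sigma$ on $\T^d$: on the right, the rotation substitution $w'=e^{i\theta}w$ (which preserves $d\sigma$) evaluates $\int_{\T^d}\log M(Q_w)\,d\sigma$ to $\log M(P_n)$; on the left, after Fubini and a rotation in the parameter $t$, one obtains $\int_0^1 \Psi(r)\,2r\,dr$, where $\Psi(r):=(2\pi)^{-d}\int_{\T^d}\log|P_n(rw)|\,d\sigma(w)$ is the polytorus mean of $\log|P_n|$ at common radius $r$. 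Thus so far we have $\int_0^1 \Psi(r)\,2r\,dr \ge \log M(P_n) - n/2$.

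The remaining step, which I expect to be the main obstacle, is to show $\log\|P_n\|_0 \ge \int_0^1 \Psi(r)\,2r\,dr$: the full Bergman mean on $\D^d$ must dominate the radial mean concentrated on the diagonal $\{|z_1|=\cdots=|z_d|\}$. This does not follow from log-plurisubharmonicity and monotonicity alone, so the argument must exploit polynomial structure. A promising route is to use the identity $\log\|P_n\|_0 = \pi^{-d}\int_{\D^d}\log M(Q_w)\,dA(w)$ (obtained by rotating $z\mapsto e^{i\theta}z$ inside the Bergman integral and applying Fubini) together with the plurisubharmonicity of $w\mapsto\log M(Q_w)$ on $\D^d$; the lower bound then becomes a statement about the deficit of a specific plurisubharmonic function whose leading behavior is controlled by the homogeneous polynomial $c_n(w)=\sum_{|\alpha|=n}a_\alpha w^\alpha$ of degree $n$, which should yield the required bound via a Lelong-Jensen style computation.

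For the equality assertions, direct computation suffices. If $P_n=a_\alpha z_1^{k_1}\cdots z_d^{k_d}$ with $k_1+\cdots+k_d=n$, then $\tfrac{1}{\pi}\iint_\D \log|z|\,dA = -\tfrac{1}{2}$ on each factor gives $\log\|P_n\|_0 = -n/2 = \log M(P_n) - n/2$, so the lower bound is attained; if $P_n$ does not vanish in $\D^d$, then $\log|P_n|$ is pluriharmonic on $\overline{\D^d}$ and iterating the mean-value equality gives $\|P_n\|_0 = M(P_n)$.
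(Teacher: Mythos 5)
Your upper bound and both equality verifications are fine and agree with the paper's. For the lower bound your route genuinely diverges from the paper, which simply iterates the univariate inequality \eqref{1.5} coordinate by coordinate under Fubini's theorem, replacing $|dz_j|/(2\pi)$ by $dA(z_j)/\pi$ one variable at a time and bounding the deficit incurred at each step by half the degree in that variable. Your diagonal‑slicing reduction is correctly executed as far as it goes: the averaging identities $\int_{\T^d}\log M(Q_w)\,d\sigma(w)=\log M(P_n)$ and $\int_{\T^d}\log\|Q_w\|_0\,d\sigma(w)=\int_0^1\Psi(r)\,2r\,dr$ are both right, so you do obtain $\int_0^1\Psi(r)\,2r\,dr\ge \log M(P_n)-n/2$.

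However, the bridging inequality you identify as the remaining obstacle, namely $\log\|P_n\|_0\ge\int_0^1\Psi(r)\,2r\,dr$, is false, so no amount of exploiting polynomial structure or Lelong--Jensen machinery will supply it. Take $d=2$ and $P(z_1,z_2)=z_1z_2+c$ with $0<c<1$. By Jensen's formula the polytorus mean at radii $(r_1,r_2)$ is $\Phi(r_1,r_2)=\max(\log r_1+\log r_2,\log c)=f(U_1+U_2)$, where $f(s)=\max(s,\log c)$ is convex and $U_j=\log r_j$. Then $\log\|P\|_0=E\bigl[f(U_1+U_2)\bigr]$ with $U_1,U_2$ independent under the radial law $2r\,dr$ on $[0,1]$, while $\int_0^1\Psi(r)\,2r\,dr=E\bigl[f(2U_1)\bigr]$; since $f\bigl((2x+2y)/2\bigr)\le\tfrac12\bigl(f(2x)+f(2y)\bigr)$, the Bergman (independent) average is at most the diagonal average, and strictly so in general: for $c=e^{-2}$ one computes $\log\|P\|_0=-1+3e^{-4}$ whereas $\int_0^1\Psi(r)\,2r\,dr=-1+e^{-2}$. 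Thus your chain passes through a middle quantity that sits on the wrong side of $\log\|P_n\|_0$, and the argument cannot be completed in this form; the repair is to abandon the diagonal slices and run the one‑variable estimate \eqref{1.5} separately in each coordinate, as in the paper.
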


It is of interest to find explicit values of the multivariate
$\|P_n\|_0.$ This problem has received a considerable attention in
Mahler's measure setting (see \cite{Boy81}, \cite{Sm1,Sm2},
\cite{EW}, \cite{GH}), and it remains a very active area of
research. In particular, it is of importance to characterize
multivariate polynomials with integer coefficients satisfying
$\|P_n\|_0=1.$ Smyth \cite{Sm2} proved a complete Kronecker-type
characterization for the multivariate Mahler's measure $M(P_n)=1$.
Thus we expect that one should be able to produce an analog for
$\|P_n\|_0,$ generalizing Theorem \ref{thm1.3}. We postpone a
detailed study of the multivariate $\|P_n\|_0$ for another occasion,
and conclude with simple examples.

\begin{example} \label{ex5.2}

The following identities hold for the multivariate $\|P_n\|_0$:

{\noindent\rm (a)} $\|z_1+z_2\|_0 = e^{-1/4}$

{\noindent\rm (b)} $\|1+z_1^{k_1} \ldots z_d^{k_d}\|_0 = 1, \quad
k_1, \ldots, k_d \ge 0$

{\noindent\rm (c)} If the polynomial $P_n$ of the form \eqref{5.1}
satisfies
\[
|a_{0\ldots 0}| \ge \sum_{0 < k_1+\ldots+k_d \le n} |a_{k_1\ldots
k_d}|,
\]
then $\|P_n\|_0 = M(P_n) = |a_{0\ldots 0}|.$

\end{example}

\section{Proofs}

\subsection{Proofs for Section 1}

\begin{proof}[Proof of Theorem \ref{thm1.1}]

If $P_n$ does not vanish in $\D,$ then $\log|P_n(z)|$ is harmonic in
$\D.$ Hence $M(P_n)=|a_0|$ and $\|P_n\|_0=|a_0|$ follow from the
contour and area mean value theorems. Assume now that $P_n$ has
zeros in $\D.$ Applying Jensen's formula, we obtain that
\[
\log M(P_n) = \frac{1}{2\pi}\int_0^{2\pi}
\log|P_n(e^{i\theta})|\,d\theta = \log|a_n| + \sum_{|z_j|\ge 1}
\log|z_j|.
\]
Furthermore,
\begin{align*}
\log \|P_n\|_0 &= \frac{1}{\pi} \int_0^1 \int_0^{2\pi}
\log|P_n(re^{i\theta})|\,rdrd\theta \\ &= 2 \int_0^1 \left(
\frac{1}{2\pi} \int_0^{2\pi} \log|P_n(re^{i\theta})|\,d\theta\right)rdr \\
&= 2 \int_0^1 \left(\log|a_n| + \sum_{|z_j|\ge r} \log|z_j| +
\sum_{|z_j| < r} \log r \right)rdr \\
&= \log|a_n| + \sum_{|z_j|\ge 1} \log|z_j| + \frac{1}{2} \sum_{|z_j|
< 1} (|z_j|^2 - 1).
\end{align*}
Hence
\[
\|P_n\|_0 = M(P_n)\, \exp\left( \frac{1}{2} \sum_{|z_j| < 1}
(|z_j|^2-1) \right).
\]

\end{proof}

\begin{proof}[Proof of Corollary \ref{cor1.2}]

Inequality \eqref{1.5} follows from \eqref{1.4} after observing that
the smallest value of the exponential is achieved when all $z_j=0$,
while the largest value is 1 when all $|z_j|\ge 1.$

\end{proof}

\begin{proof}[Proof of Theorem \ref{thm1.3}]

If $P_n$ is cyclotomic, then $\|P_n\|_0 = 1$ by Theorem
\ref{thm1.1}, because $|z_j|=1, \ j=1,\ldots,n,$ and $M(P_n)=1$.
Assume now that $\|P_n\|_0 = 1$. Let $z_j, \ j=1,\ldots,m,\ m\le n,$
be the zeros of $P_n$ in $\D.$ We recall that $M(P_n) = |a_n|
\prod_{|z_j|>1} |z_j| = |a_0| \prod_{|z_j|<1} |z_j|^{-1},$ where
$a_0\neq 0$ is the constant term of $P_n$. Thus we have from
\eqref{1.4} that
\begin{align} \label{6.1}
\|P_n\|_0 = |a_0|\, \prod_{j=1}^m \frac{e^{(|z_j|^2-1)/2}}{|z_j|}
\ge \prod_{j=1}^m \frac{e^{(|z_j|^2-1)/2}}{|z_j|}.
\end{align}
Define $g(x):=e^{(x^2-1)/2}/x,\ x>0,$ and observe that $g'(x)<0$
when $x\in(0,1),$ while $g'(x)>0$ when $x\in(1,\infty).$ Hence
\begin{align} \label{6.2}
g(1)=1 \mbox{ is the strict global minimum for $g(x)$ on
$(0,\infty)$.}
\end{align}
It follows from \eqref{6.1}-\eqref{6.2} that
\[
1 < \prod_{j=1}^m g(|z_j|) = \prod_{j=1}^m
\frac{e^{(|z_j|^2-1)/2}}{|z_j|} \le \|P_n\|_0 = 1,
\]
which is a contradiction. Hence $P_n$ has no zeros in $\D$, and
$M(P_n)=\|P_n\|_0=1$ by Theorem \ref{thm1.1}. This implies that
$P_n$ is cyclotomic by Kronecker's theorem.

We could also proceed in a different way, by assuming that
$\|P_n\|_0 = 1$ and observing from \eqref{6.1} that
\[
\exp\left(\sum_{j=1}^m \frac{|z_j|^2-1}{2}\right) = \frac{1}{|a_0|}
\prod_{j=1}^m |z_j|
\]
Since the expression on the right is an algebraic number, as well as
the sum in the exponent on the left, we obtain that equality is only
possible when the latter sum is zero, by the well known result of
Lindemann that the exponential of a nonzero algebraic number is
transcendental \cite{Ba}. Hence $|z_j|\ge 1, \ j=1,\ldots,n,$ and
$M(P_n)=\|P_n\|_0=1$ as before.

\end{proof}

\begin{proof}[Proof of Proposition \ref{prop1.5}]

Assume that the zeros of $P_n$ in $\D$ are given by $z_j,\
j=1,\ldots,m,$ and observe from \eqref{1.4} that
\[
\exp\left(\sum_{j=1}^m \frac{|z_j|^2-1}{2}\right) =
\frac{\|P_n\|_0}{M(P_n)}
\]
Since the sum in the exponent on the left is algebraic, we obtain
that the left hand side is transcendental by the well known result
of Lindemann \cite{Ba}. Note that $M(P_n)$ is always algebraic. If
$\|P_n\|_0$ were algebraic, then the right hand side would be
algebraic too, a contradiction. When $P_n$ has no zeros in $\D$, we
have $\|P_n\|_0=M(P_n)=|a_0|$ by Theorem \ref{thm1.1}.

\end{proof}

\subsection{Proofs for Section 2}

\begin{proof}[Proof of Theorem \ref{thm2.1}]

We first show that $P_n$ has $o(n)$ zeros in $D_r:=\{z:|z|<r\}$ as
$n\to\infty$, for any $r<1.$ Assume to the contrary that there is a
subsequence of $n$ such that $P_n$ has at least $\alpha n$ zeros,
with $\alpha>0,$ in some $D_r,\ r<1.$ Suppose that those zeros are
$z_j\neq 0, \ j=1,\ldots,m,\ m\le n,$ and proceed as in the proof of
Theorem \ref{thm1.3} to obtain
\begin{align} \label{6.3}
\prod_{j=1}^m g(|z_j|) = \prod_{j=1}^m
\frac{e^{(|z_j|^2-1)/2}}{|z_j|} \le \|P_n\|_0
\end{align}
by \eqref{6.1}. Since $g(x)=e^{(x^2-1)/2}/x$ is strictly decreasing
on $(0,1)$, we have that
\[
\prod_{j=1}^m g(|z_j|) \ge (g(r))^{\alpha n}.
\]
It immediately follows from \eqref{6.2} and \eqref{6.3} that
\[
\limsup_{n\to\infty} \|P_n\|_0^{1/n} \ge (g(r))^{\alpha} > 1,
\]
which is in direct conflict with assumptions of this theorem. If
$P_n$ has a simple zero at $z=0,$ then $P_n(z)=z Q_{n-1}(z)$ and
$\|P_n\|_0 = \|Q_{n-1}\|_0/\sqrt{e}.$ Hence we can apply the above
argument to $Q_{n-1}$ and come to the same conclusion that $P_n$ has
$o(n)$ zeros in $D_r:=\{z:|z|<r\},\ r<1,$ as $n\to\infty$.

The second step is to show that $\lim_{n\to\infty} (M(P_n))^{1/n} =
1.$ Note that
\begin{align} \label{6.4}
1 \le M(P_n) = \|P_n\|_0 \, \exp\left(\frac{1}{2} \sum_{|z_j| < 1}
(1-|z_j|^2) \right).
\end{align}
If $P_n$ has $m=o(n)$ zeros in $D_r,\ r<1,$ then
\[
\exp\left(\frac{1}{2} \sum_{|z_j| < 1} (1-|z_j|^2) \right) \le
e^{m/2+n(1-r^2)/2}.
\]
Using this in \eqref{6.4}, we obtain that
\begin{align*}
1 &\le \liminf_{n\to\infty} (M(P_n))^{1/n} \le \limsup_{n\to\infty}
(M(P_n))^{1/n} \\ &\le e^{(1-r^2)/2} \lim_{n\to\infty}
\|P_n\|_0^{1/n} = e^{(1-r^2)/2}.
\end{align*}
Hence $\lim_{n\to\infty} (M(P_n))^{1/n} = 1$ follows by letting
$r\to 1-.$ The proof may now be completed by applying Bilu's result
\cite{Bi} (at least when $P_n$ is irreducible for all $n\in\N$), but
we prefer to continue with an independent proof via a standard
potential theoretic argument.

Observe that $P_n(z)=a_n \prod_{j=1}^n (z-z_j)$ has $o(n)$ zeros in
$\C\setminus D_r,\ r>1,$ for otherwise we would have
$\liminf_{n\to\infty} (M(P_n))^{1/n} > 1$ as
\[
M(P_n) = |a_n| \prod_{|z_j|>1} |z_j| \ge \prod_{|z_j|>1} |z_j|.
\]
This also implies that
\begin{align} \label{6.5}
\lim_{n\to\infty} |a_n|^{1/n} = 1.
\end{align}
Hence any weak* limit $\nu$ of the sequence $\nu_n$ must satisfy
$\supp\, \nu \subset \T.$ Define the logarithmic energy of $\nu$ by
\[
I(\nu):=\iint \log \frac{1}{|z-t|} d\nu(z)\,d\nu(t).
\]
Our goal is to show that $I(\nu) = 0,$ which implies that $\nu$ has
the smallest possible energy among all positive Borel measures of
mass 1 supported on $\T.$ On the other hand, it is well known in
potential theory that the equilibrium measure minimizing the energy
integral is unique, and it is equal to the normalized arclength on
$\T$ \cite{Ra,Ts}. Thus $\nu=d\theta/(2\pi)$ and the proof would be
completed.

Define the discriminant of $P_n$ as $\Delta_n:= a_n^{2n-2}
\prod_{1\le j<k\le n} (z_j-z_k)^2$. Observe that it is an integer,
being a symmetric form with integer coefficients in the roots of
$P_n\in\Z_n[z]$. Since $P_n$ has no multiple roots, we have
$\Delta_n\neq 0$ and $|\Delta_n|\ge 1.$ Therefore,
\begin{align} \label{6.6}
\log\frac{1}{|\Delta_n|} = -(2n-2)\log|a_n| + \sum_{j\neq k}
\log\frac{1}{|z_j-z_k|} \le 0.
\end{align}
Let
\[
K_M(z,t):=\min\left(\log\frac{1}{|z-t|},M\right), \qquad M>0.
\]
It is clear that $K_M(z,t)$ is a continuous function in $z$ and $t$
on $\C\times\C$, and that $K_M(z,t)$ increases to
$\log\frac{1}{|z-t|}$ as $M\to\infty.$ Using the Monotone
Convergence Theorem and the weak* convergence of $\nu_n\times\nu_n$
to $\nu\times\nu,$ we obtain that
\begin{align*}
I(\nu) &= \lim_{M\to\infty} \iint K_M(z,t)\, d\nu(z)\,d\nu(t) \\ &=
\lim_{M\to\infty} \left( \lim_{n\to\infty} \iint K_M(z,t)\,
d\nu_n(z)\,d\nu_n(t) \right) \\ &= \lim_{M\to\infty} \left(
\lim_{n\to\infty} \left( \frac{1}{n^2} \sum_{j\neq k} K_M(z_j,z_k)
+\frac{M}{n} \right) \right) \\ &\le \lim_{M\to\infty} \left(
\liminf_{n\to\infty} \frac{1}{n^2} \sum_{j\neq k}
\log\frac{1}{|z_j-z_k|} \right) \\ &= \liminf_{n\to\infty}
\frac{1}{n^2} \log\frac{|a_n|^{2n-2}}{\Delta_n}.
\end{align*}
Hence $I(\nu)\le 0$ follows from \eqref{6.5}-\eqref{6.6}. But
$I(\mu)>0$ for any positive unit Borel measure supported on $\T,$
with the only exception for the equilibrium measure $d\mu_{\T}:=
d\theta/(2\pi)$, $I(\mu_{\T})=0,$ see \cite[pp. 53-89]{Ts}.

\end{proof}

\begin{proof}[Proof of Theorem \ref{thm2.3}]
$(a)$ We use the same notation and approach as in the proof of
Theorem \ref{thm1.3}. If $P_n$ has no zeros in $\D,$ then $\min_{1
\le j \le n} |z_j| \ge 1.$ Otherwise, let $z_j, \ j=1,\ldots,m,\
m\le n,$ be the zeros of $P_n$ in $\D.$ It follows from
\eqref{6.1}-\eqref{6.2} that
\[
\|P_n\|_0 = |a_0|\, \prod_{j=1}^m \frac{e^{(|z_j|^2-1)/2}}{|z_j|}
\ge g\left(\min_{1 \le j \le n} |z_j|\right)>1.
\]
Thus we obtain the result by the continuity of
$g(x)=e^{(x^2-1)/2}/x,\ x>0,$ and \eqref{6.2}.

$(b)$ Note that $\lim_{n\to\infty} \|P_n\|_0 = 1$ in this case too,
by \eqref{1.1} and \eqref{1.2}. Hence \eqref{2.1} holds true.
Furthermore, we have for any zero $z_k\in\C\setminus\D$ that
\[
1 \le |z_k| \le |a_n| \prod_{|z_j|>1} |z_j| = M(P_n).
\]
Thus
\[
\lim_{n\to\infty} \max_{1 \le j \le n} |z_j| = 1,
\]
and \eqref{2.2} follows.

\end{proof}

\subsection{Proofs for Section 3}

\begin{proof}[Proof of Theorem \ref{thm3.1}]

Using \eqref{3.2} for the polynomial $P_n(rz),\ r\in[0,1]$, we
obtain that
\[
\int_0^{2\pi} \log|\Lambda P_n(re^{i\theta})|\, d\theta \le 2\pi
\log M(\Lambda_n) + \int_0^{2\pi} \log|P_n(re^{i\theta})|\, d\theta.
\]
Hence \eqref{3.3} follows immediately, if we multiply this
inequality by $r\,dr/\pi$ and integrate from 0 to 1.

\end{proof}

\begin{proof}[Proof of Corollary \ref{cor3.2}]

We follow \cite{dBS} by setting $\Lambda_n(z)=nz(1+z)^{n-1} =
\sum_{k=0}^n k \binom{n}{k} z^k.$ This gives $\Lambda
P_n(z)=zP_n'(z)$ and $M(\Lambda_n)=n.$ Hence \eqref{3.4} is a
consequence of \eqref{3.3}. In order to deduce \eqref{3.5} from
\eqref{3.4}, we only need to observe that $\|zP_n'\|_0 = \|z\|_0
\|P_n'\|_0 = \|P_n'\|_0 / \sqrt{e}.$

\end{proof}

\begin{proof}[Proof of Corollary \ref{cor3.3}]

Let $\Lambda_n(z)=\binom{n}{k} z^k, 0 \le k \le n,$ where $k$ is
fixed. Then $\Lambda P_n(z)=a_k z^k$ and
$M(\Lambda_n)=\binom{n}{k}.$ It follows from \eqref{3.3} that
\[
\|a_k z^k\|_0 = |a_k| e^{-k/2} \le \binom{n}{k} \|P_n\|_0,
\]
because $\|z^k\|_0 = e^{-k/2}.$

\end{proof}

\subsection{Proofs for Section 4}

\begin{proof}[Proof of Proposition \ref{prop4.1}]

Recall that the uniform convergence of $P_n$ to $f$ on compact
subsets of $\D$ implies that $f$ is analytic in $\D,$ and that
$P_n^{(k)}$ converge to $f^{(k)}$ on compact subsets of $\D$ for any
$k\in\N.$ In particular,
\[
\lim_{n\to\infty} P_n^{(k)}(0) = f^{(k)}(0) \qquad \forall\ k\ge 0,\
k\in\Z.
\]
But $P_n^{(k)}(0)=k!a_k,$ where $a_k\in\Z$ is a corresponding
coefficient of $P_n.$ Hence the result follows.

\end{proof}

\begin{proof}[Proof of Proposition \ref{prop4.2}]

We have that
\[
\|P_n-P_{n-1}\|_{H^p} \le \|f-P_n\|_{H^p} + \|f-P_{n-1}\|_{H^p}
\]
by the triangle inequality for $p\ge 1$, and
\[
\|P_n-P_{n-1}\|_{H^p}^p \le \|f-P_n\|_{H^p}^p +
\|f-P_{n-1}\|_{H^p}^p
\]
for $0<p<1.$ In both cases, \eqref{4.1} implies that
\[
\lim_{n\to\infty} \|P_n-P_{n-1}\|_{H^p} = 0, \qquad 0<p\le\infty.
\]
If $P_n\not\equiv P_{n-1}$ then we let $a_k z^k$ be the lowest
nonzero term of $P_n-P_{n-1}$, where $|a_k|\in\N.$ Using the mean
value inequality \cite{DS}, we obtain
\[
\|P_n-P_{n-1}\|_{H^p} \ge |a_k| \ge 1, \qquad 0<p\le\infty.
\]
This is obviously impossible as $n\to\infty,$ so that we have
$P_n\equiv P_{n-1}$ for all sufficiently large $n\in\N$. Hence the
limit function $f$ is also a polynomial with integer coefficients.

\end{proof}

\begin{proof}[Proof of Theorem \ref{thm4.3}]

If \eqref{4.2} holds then $P_n$ converge to $f$ on compact subsets
of $\D$ by the area mean value inequality:
\begin{align*}
|f(z) - P_n(z)|^p &\le \frac{1}{\pi (1-|z|)^2} \iint_{|t-z|<1-|z|}
|f(t) - P_n(t)|^p\, dA \\ &\le \frac{\|f(t) - P_n(t)\|_p^p}
{(1-|z|)^2} \to 0, \quad n\to\infty,\ z\in\D.
\end{align*}
Hence $f$ has a power series expansion at $z=0$ with integer
coefficients by Proposition \ref{prop4.1}.

Conversely, suppose that $f\in A^p$ is represented by a power series
with integer coefficients. Since the partial sums of this series
converge to $f$ in $A^p$ norm for $1<p<\infty$ by Theorem 4 \cite[p.
31]{DS}, we can select the sequence $P_n$ be the sequence of the
partial sums.

\end{proof}

\subsection{Proofs for Section 5}

\begin{proof}[Proof of Proposition \ref{prop5.1}]

We apply \eqref{1.5} in each variable $z_j,\ j=1,\ldots,d,$ and use
Fubini's theorem to prove \eqref{5.2}. Indeed, \eqref{1.5} gives
that
\begin{align*}
\frac{1}{2\pi} \int_{\T} \log|P_n(z_1,\ldots,z_d)| \,|d z_1| -
\frac{k_1}{2} &\le \frac{1}{\pi} \int_{\D} \log|P_n(z_1,\ldots,z_d)|
\,dA(z_1) \\ &\le \frac{1}{2\pi} \int_{\T} \log|P_n(z_1,\ldots,z_d)|
\,|d z_1|
\end{align*}
is true for all $z_2,\ldots,z_d\in\C.$ Integrating the above
inequality with respect to $dA(z_2)/\pi$, interchanging the order of
integration in the lower and upper bounds, and applying \eqref{1.5}
in the variable $z_2$, we obtain
\begin{align*}
&\frac{1}{(2\pi)^2} \int_{\T} \int_{\T} \log|P_n(z_1,\ldots,z_d)|
\,|dz_1| |dz_2|- \frac{k_1+k_2}{2} \\ &\le \frac{1}{\pi^2} \int_{\D}
\int_{\D} \log|P_n(z_1,\ldots,z_d)| \,dA(z_1) dA(z_2) \\ &\le
\frac{1}{(2\pi)^2} \int_{\T} \int_{\T} \log|P_n(z_1,\ldots,z_d)|
\,|dz_1| |dz_2|
\end{align*}
is true for all $z_3,\ldots,z_d\in\C.$ After carrying out this
argument for each variable $z_j$, we arrive at \eqref{5.2} in $d$
steps. When $P_n(z_1,\ldots,z_d)\neq 0$ in $\D^d$, we have that
$\|P_n\|_0 = M(P_n) = |a_{0\ldots 0}|$ by the iterative application
of Theorem \ref{thm1.1}. If $P_n(z_1,\ldots,z_d) = a_{k_1\ldots k_d}
z_1^{k_1} \ldots z_d^{k_d}$, where $k_1+\ldots+k_d = n,$ then we
evaluate directly that $M(P_n) = |a_{k_1\ldots k_d}|$ and $\|P_n\|_0
= |a_{k_1\ldots k_d}| e^{-n/2}$, because $\|z_j\|_0 = e^{-1/2},\
j=1,\ldots,n.$

\end{proof}

\begin{proof}[Proof of Example \ref{ex5.2}]

(a) Applying \eqref{1.4}, we have that
\begin{align*}
\frac{1}{\pi^2} \int_{\D} \int_{\D} \log|z_1+z_2| \,dA(z_1) dA(z_2)
= \frac{1}{\pi} \int_{\D} \frac{|z_2|^2-1}{2} \,dA(z_2) =
-\frac{1}{4}.
\end{align*}
(b) is an immediate consequence of (c).\\
(c) Let $a_{0\ldots 0} = |a_{0\ldots 0}| e^{i\phi}.$ Observe that
$P_n(z_1,\ldots,z_d) + \varepsilon e^{i\phi} \neq 0$ in $\D^d$ for
any $\varepsilon>0$, because
\[
|P_n(z_1,\ldots,z_d) + \varepsilon e^{i\phi}| \ge |a_{0\ldots 0}| +
\varepsilon - \sum_{0 < k_1+\ldots+k_d \le n} |a_{k_1\ldots k_d}| >
0
\]
by the triangle inequality. We obtain that $\|P_n + \varepsilon
e^{i\phi}\|_0 = M(P_n + \varepsilon e^{i\phi}) = |a_{0\ldots 0}| +
\varepsilon$ by the area and contour mean value properties of the
(pluri)harmonic function $\log|P_n(z_1,\ldots,z_d) + \varepsilon
e^{i\phi}|$ in $\D^d$, and the result follows by letting
$\varepsilon \to 0.$

\end{proof}

\end{document}